 \def\LaTeX{\leavevmode L\raise.42ex
   \hbox{\kern-.3em\size{\sf@size}{0pt}\selectfont A}\kern-.15em\TeX}
\newcommand{\BibTeX}{{\rm B\kern-.05em{\sc
i\kern-.025emb}\kern-.08em\TeX}}
\newtheorem{col}{Corollary}[section]
\newtheorem{thm}{Theorem}[section]
\newtheorem{lem}[thm]{Lemma}
\theoremstyle{definition}
\newtheorem{defn}{Definition}
\numberwithin{equation}{section}
\begin{document}

\title{Cubature formulas on combinatorial graphs}
\maketitle

\begin{center}

\author{Isaac Z. Pesenson }\footnote{ Department of Mathematics, Temple University,
 Philadelphia,
PA 19122; pesenson@math.temple.edu. The author was supported in
part by the National Geospatial-Intelligence Agency University
Research Initiative (NURI), grant HM1582-08-1-0019. }

\author{Meyer Z. Pesenson }\footnote{ 
CMS Department, California Institute of Technology, MC 305-16, Pasadena, CA 91125; mzp@cms.caltech.edu.
The author was supported in part by the National Geospatial-Intelligence Agency University Research Initiative (NURI), grant HM1582-08-1-0019 and by AFOSR, MURI, Award FA9550-09-1-0643  }

\author{Hartmut F\"{u}hr}
\footnote{Lehrstuhl A f\"ur Mathematik, RWTH Aachen, D-52056 Aachen, Germany; fuehr@matha.rwth-aachen.de}

\end{center}

\begin{abstract}
 The goal of the paper is to establish cubature formulas on combinatorial graphs.
Two types of cubature formulas are developed.  Cubature formulas of the first type are exact on spaces of variational splines on graphs. Since badlimited functions can be obtained as limits of variational splines we obtain cubature formulas which are "essentially"   exact on spaces of bandlimited functions.  Cubature formulas of the second type are exact on spaces of bandlimited functions.  Accuracy  of   cubature formulas is given in terms of smoothness which is measured by means of combinatorial Laplace operator. The results have potential  applications to problems that arise in data  mining.

\end{abstract}

 {\bf Keywords and phrases:  combinatorial Laplace operator,
Poincare and Plancherel-Polya inequalities, bandlimited functions,
cubature formulas, splines, frames}

 {\bf Subject classifications:}
{Primary: 65D32, 41A99, 42C15, 94A20; Secondary: 94A12 }

 \section{Introduction}

Cubature formulas for approximate and exact evaluation of integrals of functions defined on Euclidean spaces or on smooth manifolds is an important and continuously developing subject. During last years in connection with applications to information theory analysis of functions defined on combinatorial graphs attracted attention of many mathematicians.  The following list of a few classical and recent papers is very far from being complete: \cite{CM}, \cite{GNC}, \cite{HVG}, \cite{SNFOV}, \cite{NSZ}, \cite{NO}.

In particular certain cubature formulas for functions defined on combinatorial graphs was recently considered in the paper \cite{GNC}. There, given values of a function f on a small subset $U$ of the set of all vertices $V$ of a graph, the authors estimate wavelet coefficients via specific  cubature formulas.

In the present paper we develop a set of rules (cubature formulas)  which allow for approximate or exact evaluation of "integrals" $\sum_{v\in V}f(v)$ of functions  by using their values  on  subsets  $U\subset V$ of vertices. 
We make extensive use  of our previous work on Shannon sampling of bandlimited functions  and variational splines on combinatorial graphs \cite{FP}-\cite{PesPes2}.
Our results can find applications to problems that arise in connection with data
  filtering, data denoising and  data dimension reduction.

In section 2 we review our results \cite{Pes09} about variational interpolating spines on graphs and describe an algorithm which allows an effective computation of variational splines. In section 3 by using interpolating splines  we develop a set of cubature formulas. Theorem 3.1 shows that these formulas are \textit{exact} on the set of variational splines. Theorem \ref{opt} explains that our cubature formulas are \textit{optimal}. 

In section 4, using our result that  bandlimited functions are limits of variational splines (see  \cite{Pes08}, \cite{Pes09}) we show, that cubature formulas developed in section 3 are "essentially" exact on bandlimited functions.

It can be verified for example,   that for a cycle graph of 1000 vertices a set  of  about $670$ "uniformly" distributed vertices is sufficient to have asymptotically exact cubature formulas for linear combinations of the first $290$ eigenfunctions (out of $1000$) of the corresponding combinatorial Laplace operator.

It is worth to note that all results of section 3 which provide errors of approximation of integrals of functions on $V$ through their values on a $U\subset V$  reflect 

1) geometry of $U$ which is  inherited  into the quantity $\sqrt{|V|-|U|}=\sqrt{|S|}$ and into the Poincare constant $\Lambda$ (see section 3 for definitions),

2) smoothness of functions which is measured in terms of combinatorial Laplace operator.

In section 4 we develop a different set of cubature formulas which are \textit{exact} on appropriate sets of bandlimited functions. The results in this section are formulated in the language of frames and only useful if it is possible to calculate dual 
frames explicitly. Since in general it is not easy to compute a dual frame we finish this section by explaining another approximate cubature formula which is based on the so-called frame algorithm.

This paper is a discrete counterpart of the paper \cite{PG}.
In a  forthcoming paper we are going to extend our results to weighted and infinite graphs.

\section{Variational (polyharmonic) splines on graphs}

Let $G$ denote an undirected weighted graph, with  a finite or countable number of vertices $V(G)$  and weight function $w: V(G) \times V(G) \to \mathbb{R}_0^+$. $w$ is symmetric, i.e., $w(u,v) = w(v,u)$, and $w(u,u)=0$ for all $u,v \in V(G)$. The edges of the graph are the pairs $(u,v)$ with $w(u,v) \not= 0$.

Let $L_{2}(G)\>\>$ denote the space of  all real-valued functions with the inner product
$$
\left<f,g\right>=\sum_{v\in V(G)}f(v)g(v)
$$
and  the norm
\[
 \| f \| = \left( \sum_{v \in V(G)} |f(v)|^2 \right)^{1/2}.
\]

In  the case of a finite graph and $L_{2}(G)$-space  the weighted Laplace operator $\mathcal{L}: L_{2}(G) \to L_{2}(G)$  is introduced  via
\begin{equation}\label{L}
 (\mathcal{L} f)(v) = \sum_{u \in V(G)} (f(v)-f(u)) w(v,u)~.
\end{equation}
This  graph Laplacian is a well-studied object; it is known to be a positive-semidefinite self-adjoint \textit{bounded} operator. 
The notation
$l_{2}$ will be used for the Hilbert space of all sequences of
real numbers $\overline{y}=\{y_{\nu}\}$, for which $ \sum_{\nu}|y_{\nu}|^{2}< \infty. $
\bigskip

Variational splines on combinatorial graphs were developed in \cite{Pes09}. 

 \textbf{Variational Problem}

 Given a  subset of vertices $U=\{u\}\subset V,  $
 a sequence of real numbers
$\overline{y}=\{y_{u}\}\in l_{2}, u\in U$, a natural $k$, and a positive $\varepsilon>0$ we consider the following
variational problem:

\bigskip

 \textsl{Find a function  $Y$ from the
space $L_{2}(G)$ which has the following properties:}

1) $Y(u)=y_{u}, u\in U,$

2) $Y$ \textsl{minimizes functional $Y\rightarrow
\left\|(\varepsilon I+\mathcal{L}) ^{k}Y\right\|$.}

We show that the above variational problem has  a unique solution
$Y_{k,\varepsilon}^{U,\overline{y}}$.  

For the sake of simplicity we will also use notation
$Y_{k}^{\overline{y}}$ assuming that $U$ and $\varepsilon$ are fixed.

We say that $Y_{k}^{\overline{y}}$
is a\textit{ variational  spline }of order $k$.  It is also shown
that every spline is a linear combination of fundamental solutions
of the operator $(\varepsilon I+\mathcal{L})^{k}$ and in this
sense it is a \textit{polyharmonic } function with singularities. Namely it
is shown that every spline satisfies the following equation
\begin{equation}
\label{rep1}
(\varepsilon I+\mathcal{L})^{2k}Y_{k}^{\overline{y}}=\sum_{u\in
U}\alpha_{u}\delta_{u},
\end{equation}
where $\{\alpha_{u}\}_{u\in
U}=\left\{\alpha_{u}(Y_{k}^{\overline{y}})\right\}_{u\in U}$ is a sequence
from $l_{2}$ and $\delta_{u}$ is the Dirac measure at a vertex
$u\in U$. The set of all such splines for a fixed $U\subset V$
and fixed $k>0, \varepsilon\geq 0,$ will be denoted as
$\mathcal{Y}(U, k, \varepsilon).$

A fundamental solution  $F_{k}^{u} \left(=F_{k,\varepsilon}^{u}\right), u\in V,$ of
the operator $(\varepsilon I+\mathcal{L})^{k} $ is the solution of
the equation

\begin{equation}
(\varepsilon I+\mathcal{L})^{k}F_{k}^{u}=\delta_{u},\>k\in \mathbb{N},
\end{equation}
where $\delta_{u}$ is the Dirac measure at $u\in V(G)$.
It follows from (\ref{rep1}) that the following representation holds
$$
Y_{k}^{\overline{y}}=\sum_{u\in
U}\alpha_{u}F^{u}_{2k}.
\label{fund.sol.representation}
$$

It is shown in \cite{Pes09} that for every  set of vertices
$U=\{u\}, $ every natural $k$,  every $\varepsilon \geq 0,$ and for any given
sequence $\overline{y}=\{y_{u} \}\in l_{2},$ the solution
$Y_{k}^{\overline{y}}$ of the Variational Problem has a
representation
\begin{equation}
\label{repren1}
Y_{k}^{\overline{y}}=\sum_{u\in U} y_{u}L^{u}_{k},
\end{equation}
where $L^{u}_{k}$ is the so called Lagrangian
spline, i.e. it is a solution of the same Variational Problem with
constraints $ L^{u}_{k}(v)=\delta_{u,v},\> u\in U,$
where $\delta_{u,v}$ is the Kronecker delta. It implies in particular, that
$\mathcal{Y}(U, k, \varepsilon)$ is a linear set.

Given a function $f\in L_{2}(G)$ we will say that the spline
$Y_{k}^{f}$  interpolates $f$ on $U$ if
$Y_{k}^{f}(u)=f(u)$ for all $u\in U$.

\bigskip

\textbf{Algorithm for computing variational splines.}

The above results give a constructive way for computing
variational splines.  Suppose we are going to
construct splines which have prescribed values on a subset of vertices $U\subset V$.

1.  One has to solve the following $|U|$ systems of linear equations  of the size $|V|\times|V|$
\begin{equation}
(\varepsilon I+\mathcal{L})^{k}F_{k,\varepsilon}^{u}=\delta_{u},
u\in U, \>\>\>k\in \mathbb{N},
\end{equation}
in order to determine functions $F_{k,\varepsilon}^{u}$.

2. Let $ \delta_{w,v}$ be
the Kronecker delta.  One has to solve  $|U| $ linear
system of the size  $|U|\times |U| $ to determine coefficients $\alpha_{u}^{w}$

\begin{equation}
\delta_{w, \gamma}=\sum_{u\in U}\alpha_{u}^{w} F_{k,\varepsilon}^{u}(\gamma)              ,  \>\>\>\> w, \gamma\in U.
\end{equation}

3. It gives
the following representation of the corresponding Lagrangian spline
\begin{equation}
L^{w}_{k, \varepsilon}=\sum_{u\in U}\alpha_{u}^{w}F_{k,\varepsilon}^{u},\>\>\>\> w\in U.
\end{equation}

4. Every spline $Y_{s,\varepsilon}^{y}\in
\mathcal{Y}(U,s,\varepsilon)$ which takes prescribed values
$\overline{y}=\{y_{w} \}, w\in U,$ can be written
explicitly as
$$ Y_{s,\varepsilon}^{y}=\sum_{w\in W} y_{w}L^{w}_{s,
\varepsilon}. 
$$

\section{Cubature formulas  which are exact  on variational splines  }

 We introduce the following  scalars
$$
\theta_{u}=\theta_{u}(U, \>k,\>\varepsilon)=\sum_{v\in V}L_{k,\varepsilon}^{U, u}(v)
$$
and by applying the formula (\ref{repren1}) we obtain the following fact.

\begin{thm}
\label{ExactS}
In the same notations as above for every subset of vertices $U=\{u\}$  and every $k\in \mathbb{N}, \varepsilon > 0,$ there exists a set of weights $\theta_{u}=\theta_{u}(U, \>k,\>\varepsilon),\>u\in U,$ such that for every spline $Y_{k,\varepsilon}^{U,y}$ that takes values $Y_{k,\varepsilon}^{U,y}(u)=y_{u},\>u\in U,$ the following exact formula holds
\begin{equation}
\label{exact}
\sum_{v\in V}Y_{k,\varepsilon}^{U,y}(v)=\sum_{u\in U}y_{u}\theta_{u}
\end{equation}

\end{thm}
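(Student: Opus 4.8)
The plan is to read the weights directly off the Lagrangian representation of splines established in (\ref{Lrep}). The key structural fact is that the map $\{y_{u}\} \mapsto \sum_{v\in V} Y_{k,\varepsilon}^{U,y}(v)$ is a linear functional of the interpolation data, and since $\mathcal{Y}(U,k,\varepsilon)$ is spanned by the \emph{fixed} Lagrangian splines $L_{k}^{u}$, $u\in U$, the value of this functional is completely determined by its values on those generators. Thus the weights should be nothing more than the "integrals" of the Lagrangian splines, and they will automatically be independent of the data because the $L_{k}^{u}$ themselves are.

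First I would invoke the representation $Y_{k,\varepsilon}^{U,y}=\sum_{u\in U}y_{u}L_{k}^{u}$ from (\ref{Lrep}), which holds precisely because any spline interpolating the data $\{y_{u}\}$ on $U$ is this particular combination of Lagrangian splines. Then I would sum over the entire vertex set and interchange the order of summation:
\begin{equation*}
\sum_{v\in V} Y_{k,\varepsilon}^{U,y}(v)=\sum_{v\in V}\sum_{u\in U} y_{u}L_{k}^{u}(v)=\sum_{u\in U} y_{u}\left(\sum_{v\in V}L_{k}^{u}(v)\right).
\end{equation*}
This dictates the definition
\begin{equation*}
\theta_{u}=\theta_{u}(U,k,\varepsilon)=\sum_{v\in V}L_{k}^{u}(v),
\end{equation*}
i.e. each weight is the total sum over $V$ of the corresponding Lagrangian spline. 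Since $L_{k}^{u}$ depends only on $U$, $k$, and $\varepsilon$ and not on the prescribed values $\{y_{u}\}$, the numbers $\theta_{u}$ are fixed reals, and substituting the definition back into the displayed identity yields (\ref{exact}) for every admissible choice of data.

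I do not expect a genuine obstacle here. For a finite graph $G$ both summations range over finite index sets, so the interchange is automatic and no convergence question arises; the entire force of the theorem lies not in an estimate but in the observation that a \emph{single} set of weights serves \emph{all} data $\{y_{u}\}$ simultaneously. That uniformity is exactly what the linearity of $\mathcal{Y}(U,k,\varepsilon)$ together with the Lagrangian basis $\{L_{k}^{u}\}$ of (\ref{Lrep}) provides, so once these structural facts are in hand the proof is a one-line computation. (The only place where care would be needed is the countably infinite setting, where one would first have to guarantee absolute summability of $\{L_{k}^{u}(v)\}_{v\in V}$; this is not required for the finite graphs that are the subject of the statement.)
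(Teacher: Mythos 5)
Your proof is correct and follows essentially the same route as the paper: both invoke the Lagrangian representation (\ref{Lrep}) and define $\theta_{u}=\sum_{v\in V}L_{k,\varepsilon}^{U,u}(v)$, after which the identity (\ref{exact}) follows by interchanging the two finite sums. Your write-up is in fact slightly more complete, since the paper's proof stops after introducing the scalars without displaying the final interchange of summation.
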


 For a subset $S\subset V$ (finite or infinite) the
notation $L_{2}(S)$ will denote the space of all functions from
$L_{2}(G)$ with support in $S$:
$$
L_{2}(S)=\{\varphi\in L_{2}(G), \>\varphi(v)=0, \>v\in V\backslash
S\}.
$$
\begin{defn}
We say that a set of vertices $S\subset V$ is a $\Lambda$-set
if for any $\varphi\in L_{2}(S)$ it admits a Poincare  inequality
with a constant $\Lambda=\Lambda(S)>0$
\begin{equation}
\|\varphi\|\leq \Lambda\|\mathcal{L}\varphi\|,\> \varphi\in
L_{2}(S).
\end{equation}
The infimum of all $\Lambda>0$ for which $S$ is a $\Lambda$-set
will be called the Poincare constant of the set $S$ and denoted by
$\Lambda(S)$.
\end{defn}

The following lemma holds true \cite{Pes09}.

\begin{lem}
\label{lemma}
If $A$ is a  self-adjoint positive definite operator in a
Hilbert space $H$ and for an  $\varphi\in H$ and a positive $a>$
the following inequality holds true
$$
\|\varphi\|\leq a\|A\varphi\|,
$$ then for the same $\varphi \in H$, and all $ k=2^{l}, l=0,1,2,...$ the following
inequality holds
$$
\|\varphi\|\leq a^{k}\|A^{k}\varphi\|.
$$
\end{lem}

\begin{proof}
By the spectral theory there exist a direct integral of
Hilbert spaces
$$
X=\int_{0}^{\|A\|} X(\tau)dm (\tau )
$$
 and a unitary operator
$F$ from $H$ onto $X$, which transforms domain of $A^{t}, t\geq
0,$ onto $X_{t}=\{x\in X|\tau^{t}x\in X \}$ with norm
$$
\|A^{t}f\|_{H}=\left (\int_{0}^{\|A\|} \tau^{2t} \|Ff(\tau
)\|^{2}_{X(\tau)} d m(\tau) \right )^{1/2}
$$
and $F(A^{t} f)=\tau ^{t} (Ff)$. According to our assumption we
have for a particular  $\varphi\in H$
$$
\int _{0}^{\|A\|}| F\varphi(\tau)|^{2}d m(\tau)\leq a^{2} \int
_{0}^{\|A\|}\tau ^{2}| F\varphi(\tau)|^{2}d m(\tau)
$$
and then for the interval $B=B(0, a^{-1})$ we have
$$\int _{B}| F\varphi(\tau)|^{2}d m(\tau)+
\int_{[0, \|A\|]\setminus B}|F\varphi|^{2}d m(\tau)\leq
$$
$$
a^{2}\left( \int_{B}\tau^{2}|F\varphi|^{2}d m(\tau) +\int_{[0,
\|A\|]\setminus B}\tau^{2}|F\varphi| ^{2}d m(\tau)\right ) .
$$
 Since $a^{2}\tau^{2}<1$ on $B(0, a^{-1})$
$$
0\leq \int_{B}\left
(|F\varphi|^{2}-a^{2}\tau^{2}|F\varphi|^{2}\right)d m(\tau) \leq
\int _{[0, \|A\|]\setminus B}\left ( a^{2}
\tau^{2}|F\varphi|^{2}-|F\varphi|^{2}\right)d m(\tau).
$$
This inequality
 implies the inequality
 $$
 0\leq \int_{B}\left (a^{2}\tau^{2}|F\varphi|^{2}-
a^{4}\tau^{4}|F\varphi|^{2}\right)d m(\tau) \leq \int_{[0,
\|A\|]\setminus B}\left ( a^{4}
\tau^{4}|F\varphi|^{2}-a^{2}\tau^{2}|F\varphi| ^{2}\right)d
m(\tau)$$
 or
  $$a^{2}\int_{[0, \|A\|]}\tau^{2}|F\varphi|^{2}d m(\tau) \leq
 a^{4}\int_{\mathbb{R}_{+}}\tau^{4}|F\varphi|
 ^{2}d m(\tau),
 $$
 which means
 $$
 \|\varphi\|\leq a\|A\varphi\|\leq a^{2}\|A^{2}\varphi\|.
 $$
 Now, by using induction one can finish the proof of the
Lemma. The Lemma is proved.
\end{proof}

The following Theorem gives a cubature rule that allows to compute the integral $\sum_{v\in V}f(v)$ by using only values of $f$ on a smaller set $U$.

\begin{thm}
\label{fund}
For every set of vertices   $U\subset V$  for which $S=V\setminus U$ is a $\Lambda$-set  and for any $\varepsilon>0, \> k=2^{l}, \>l\in \mathbb{N},$ there exist weights $\theta_{u}=\theta_{u}(U, \>k,\>\varepsilon)$ such that for
every function $f\in L_{2}(G),$
\begin{equation}
\label{cub}
\left |\sum_{v\in V}f(v)-\sum_{u\in U}f(u)\theta_{u}\right |\leq 2\sqrt{|S|}\Lambda^{k}
\left\|\left(\varepsilon I+\mathcal{L}\right)^{k}f\right\|.
\end{equation}
\end{thm}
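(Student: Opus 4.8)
The plan is to use the exact cubature weights from Theorem~\ref{ExactS} and to control the error by comparing $f$ with its interpolating spline. Concretely, I would take $\theta_{u}=\sum_{v\in V}L_{k,\varepsilon}^{U,u}(v)$ exactly as in Theorem~\ref{ExactS}, and for a given $f\in L_{2}(G)$ let $Y=Y_{k,\varepsilon}^{U,f}$ be the variational spline interpolating $f$ on $U$. Since $Y(u)=f(u)$ for $u\in U$, Theorem~\ref{ExactS} gives $\sum_{u\in U}f(u)\theta_{u}=\sum_{v\in V}Y(v)$, so the left-hand side of (\ref{cub}) equals $\left|\sum_{v\in V}\bigl(f(v)-Y(v)\bigr)\right|$. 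Writing $\varphi=f-Y$, which vanishes on $U$ and hence lies in $L_{2}(S)$ with $S=V\setminus U$, I would bound the sum by Cauchy--Schwarz: since $\varphi$ is supported on $S$,
$$
\left|\sum_{v\in V}\varphi(v)\right|=\left|\sum_{v\in S}\varphi(v)\right|\le \sqrt{|S|}\,\|\varphi\|.
$$

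It then remains to estimate $\|\varphi\|$. The Poincar\'e inequality for the $\Lambda$-set $S$ gives $\|\varphi\|\le\Lambda\|\mathcal{L}\varphi\|$, and since $\mathcal{L}$ is positive one has $\|\mathcal{L}\varphi\|\le\|(\varepsilon I+\mathcal{L})\varphi\|$ (expand $\|(\varepsilon I+\mathcal{L})\varphi\|^{2}$ and discard the nonnegative cross term $2\varepsilon\langle\varphi,\mathcal{L}\varphi\rangle$), so that $\|\varphi\|\le\Lambda\bigl\|(\varepsilon I+\mathcal{L})\varphi\bigr\|$ for every $\varphi\in L_{2}(S)$.

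The crux, and the step where the hypothesis $k=2^{l}$ is used, is to promote this first-order inequality to order $k$ \emph{without} leaving $L_{2}(S)$. The obstruction is that $(\varepsilon I+\mathcal{L})\varphi$ need not be supported on $S$, so the Poincar\'e inequality cannot simply be iterated. Instead I would exploit the self-adjointness of $T:=\varepsilon I+\mathcal{L}$ through a squaring argument: if $\|\varphi\|\le C\|T^{m}\varphi\|$ holds for all $\varphi\in L_{2}(S)$, then for such $\varphi$,
$$
\|\varphi\|^{2}\le C^{2}\langle T^{m}\varphi,T^{m}\varphi\rangle=C^{2}\langle\varphi,T^{2m}\varphi\rangle\le C^{2}\|\varphi\|\,\|T^{2m}\varphi\|,
$$
hence $\|\varphi\|\le C^{2}\|T^{2m}\varphi\|$. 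Starting from $C=\Lambda$, $m=1$ and iterating $l$ times doubles the exponent at each step, yielding $\|\varphi\|\le\Lambda^{k}\|T^{k}\varphi\|$ precisely when $k=2^{l}$. This is the only place the power-of-two hypothesis enters, and I expect it to be the main obstacle.

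Finally I would produce the factor $2$ from the minimizing property of the spline. Because $f$ itself satisfies the interpolation constraint, $Y$ minimizes $\|T^{k}\,\cdot\,\|$ among all interpolants, so $\|T^{k}Y\|\le\|T^{k}f\|$, and the triangle inequality gives $\|T^{k}\varphi\|\le\|T^{k}f\|+\|T^{k}Y\|\le2\|T^{k}f\|$. Chaining the Cauchy--Schwarz bound, the order-$k$ Poincar\'e estimate, and this last inequality then delivers (\ref{cub}); everything apart from the iteration step is routine once the squaring trick is in place.
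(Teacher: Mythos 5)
Your proposal is correct and follows essentially the same route as the paper: exact weights from Theorem~\ref{ExactS}, Cauchy--Schwarz over $S$, the Poincar\'e inequality combined with $\|\mathcal{L}\varphi\|\le\|(\varepsilon I+\mathcal{L})\varphi\|$, the power-doubling step for $k=2^{l}$, and the factor $2$ from the minimizing property. The only difference is that you prove the squaring/iteration lemma inline (correctly), whereas the paper invokes it as a cited Lemma from \cite{Pes08}.
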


\begin{proof}

 If $f\in L_{2}(G)$ and
$Y_{k,\varepsilon}^{U, f}$ is a variational spline which
interpolates $f$ on a set $U=V\setminus S$ then
\begin{equation}
\label{cub1}
\left |\sum_{v\in V}f(v)-\sum_{v\in V}Y_{k,\varepsilon}^{U, f}(v)\right |\leq\sum_{v\in S}\left|f(v)-Y_{k,\varepsilon}^{U, f}(v)\right |\leq \sqrt{|S|}\left\|f-Y_{k,\varepsilon}^{U, f}\right\|
\end{equation}
Since $S$  is a
$\Lambda$- set we have
\begin{equation}
\|f-Y_{k,\varepsilon}^{U, f}\left\|\leq
\Lambda \| \mathcal{L}\left(f-Y_{k,\varepsilon}^{U, f}\right)\right\|.
\end{equation}
For any $g\in L_{2}(G)$ the following inequality holds true
\begin{equation}
\label{exten}
\|\mathcal{L}g\|\leq \|(\varepsilon I+\mathcal{L})g\|.
\end{equation}
Thus one obtains the inequality
\begin{equation}
\label{ineq1}
\left\|f-Y_{k,\varepsilon}^{U, f}\right\|\leq \Lambda\left\|(\varepsilon
I+\mathcal{L})\left(f-Y_{k,\varepsilon}^{U, f}\right)\right\|.
\end{equation}

We apply  Lemma  \ref{lemma}  with $A=\varepsilon I+\mathcal{L}$,
$a=\Lambda$ and $\varphi=f-Y_{k,\varepsilon}^{U, f}$. It gives the
inequality
\begin{equation}
\label{LA}
\left\|f-Y_{k,\varepsilon}^{U, f}\right\|\leq \Lambda^{k}\left\|(\varepsilon
I+\mathcal{L})^{k}\left(f-Y_{k,\varepsilon}^{U, f}\right)\right\|
\end{equation}
for all $ k=2^{l}, l=0,1,2,...$
 Using the minimization property of $Y_{k,\varepsilon}^{U, f}$ we
obtain
$$
\left\|f-Y_{k,\varepsilon}^{U, f}\right\|\leq 2\Lambda^{k}\left\|\left(\varepsilon
I+\mathcal{L}\right)^{k}f\right\|,k=2^{l}, l\in \mathbb{N}.
$$
Together with (\ref{cub1}) it gives
\begin{equation}
\left |\sum_{v\in V}f(v)-\sum_{v\in V}Y_{k,\varepsilon}^{U, f}(v)\right |\leq 2\sqrt{|S|}\Lambda^{k}\|(\varepsilon
I+\mathcal{L})^{k}f\|,\>k=2^{l}, l\in \mathbb{N}.
\end{equation}
By applying the Theorem \ref{ExactS} we finish the proof.
\end{proof}

It is worth to note that the above formulas are optimal in the sense it is described below.

\begin{defn}
For the given  $U\subset V, f\in L_{2}(G), k\in \mathbb{N}, \varepsilon\geq
0, R>0,$ the notation $Q(U,f,k, \varepsilon, R)$ will be used for
a set of all functions $h$ in $L_{2}(G)$ such that

\bigskip

1) $h(u)=f(u), u\in U,$

\bigskip
and

\bigskip

2) $\left\|(\varepsilon I+\mathcal{L})^{k}h\right\|\leq R.$

\end{defn}

 It is easy to verify that every set $Q(U,f,k,
\varepsilon, R)$ is convex, bounded, and closed. It implies that the set of all integrals of functions in $Q(U,f,k,\varepsilon, R)$ is an interval i. e.
\begin{equation}
\label{int}
[a,\>b]=\left\{ \sum_{v\in V}h(v):\>\> h\in       Q(U,f,k,\varepsilon, R)     \right\}
\end{equation}

The optimality result is the following.

\begin{thm}
\label{opt}
For every set of vertices   $U\subset V$  and for any $\varepsilon>0, \> k=2^{l}, \>l\in \mathbb{N},$ if  $\theta_{u}=\theta_{u}(U, \>k,\>\varepsilon)$ are the same weights that appeared  in the previous statements, then for any $ g\in       Q(U,f,k,\varepsilon, R)  $ 
\begin{equation}
\sum_{u\in U}g(u)\theta_{u}=\frac{a+b}{2},
\end{equation}
where $[a,\>b]$ is defined in  (\ref{int}).
\end{thm}
\begin{proof}
We are going to show that for a given function $f$ the
interpolating spline $Y_{k, \varepsilon}^{U,f}$ is the center of
the convex, closed and bounded set $Q(U,f,k, \varepsilon, R)$ for
any $R\geq \left\|(\varepsilon I +\mathcal{L})^{k}Y_{k,\varepsilon}^{U,f}\right\|$ .
In other words it is sufficient to show that if
$$
Y_{k, \varepsilon}^{U,f}+h\in Q(U,f,k, \varepsilon, R)
$$
for some function $h$ then the function $Y_{k, \varepsilon}^{U,f}-h$
also belongs to the same intersection.
 Indeed, since $h$ is zero on the set $U$ then according to (\ref{rep1}) one has
$$
\left <(\varepsilon I+ \mathcal{L})^{k}Y_{k, \varepsilon}^{U,f},
(\varepsilon I+\mathcal{L})^{k}h \right>=\left <(\varepsilon I+
\mathcal{L})^{2k}Y_{k, \varepsilon}^{U,f}, h \right>=0.
$$
 But then

$$\left\|(\varepsilon I+\mathcal{L})^{k}(Y_{k, \varepsilon}^{U,f}+h)\right\|=
\left\|(\varepsilon I+\mathcal{L})^{k}\left(Y_{k,
\varepsilon}^{U,f}-h\right)\right\|.
$$
In other words,
 $$\left\|(\varepsilon I+\mathcal{L})^{k}\left(Y_{k, \varepsilon}^{U,f}-h\right)\right\|\leq R $$
 and because $Y_{k, \varepsilon}^{U,f}+h$ and $Y_{k, \varepsilon}^{U,f}-h$ take the same
values on $U$ the function $Y_{k, \varepsilon}^{U,f}-h$ belongs to
 $Q(U,f,k, \varepsilon, R).$ From here the Theorem follows. 
 \end{proof}

\begin{col}
Fix a function $f\in L_{2}(G)$ and a 
 set of vertices   $U\subset V$  for which $S=V\setminus U$ is a $\Lambda$-set. Then for any $\varepsilon>0, \> k=2^{l}, \>l\in \mathbb{N},$ for the same set of weights $\theta_{u}=\theta_{u}(U, \>k,\>\varepsilon)\in \mathbb{R}$ that appeared in the previous statements the following inequalities hold  for
every function $g\in Q(U,f,k, \varepsilon, R),$
\begin{equation}
\label{last}
\left |\sum_{v\in V}g(v)-\sum_{u\in U}f(u)\theta_{u}\right |\leq \sqrt{|S|}\Lambda^{k}diam  \>Q(U,f,k,\varepsilon, R).
\end{equation}
\end{col}
\begin{proof}
 Since $f$ and $g$ coincide on $U$ from (\ref{cub1}) and  (\ref{LA}) we obtain the inequality

\begin{equation}
\left |\sum_{v\in V}g(v)-\sum_{v\in V}Y_{k,\varepsilon}^{U, f}(v)\right |\leq   \sqrt{|S|}\Lambda^{k}\left\|(\varepsilon
I+\mathcal{L})^{k}\left(f-Y_{k,\varepsilon}^{U, f}\right)\right\|
\end{equation}
By the Theorem \ref{opt} the following inequality holds
$$
  \left\|(\varepsilon I+\mathcal{L})^{k}\left(Y_{k, \varepsilon}^{U,f}-g\right)\right\|\leq \frac{1}{2} diam \> Q(U,f,k,\varepsilon, R)
$$
for any $g\in Q(U,f,k,\varepsilon, R)$. The last two inequalities imply the Corollary.  
\end{proof}
\bigskip

\section{Approximate cubature formulas for bandlimited functions}

Operator(matrix) $\mathcal{L}$ is symmetric and positive definite. 
Let  $\textbf{E}_{\omega}(\mathcal{L})$ be the span of eigenvectors of $\mathcal{L}$
whose corresponding eigenvalues are $\leq \omega$. The invariant subspace $\textbf{E}_{\omega}(\mathcal{L})$
is the space of all vectors in $L_{2}(G)$ on which  $\mathcal{L}$ has norm $\omega$.
In other words $f$ belongs to $\textbf{E}_{\omega}(\mathcal{L})$ if and only if the following Bernstein-type inequality holds
\begin{equation}
\label{Bern}
\|\mathcal{L}^{s}f\|\leq \omega^{s}\|f\|, \> s\geq0.
\end{equation}
The Bernstein inequality (\ref{Bern}), the Lemma \ref{lemma}, and the Theorem \ref{fund} imply the following result.

\begin{col}  
For every set of vertices   $U\subset V$  for which $S=V\setminus U$ is a $\Lambda$-set  and for any $\varepsilon>0, \> k=2^{l}, \>l\in \mathbb{N},$ there exist weights $\theta_{u}=\theta_{u}(U, \>k,\>\varepsilon)\in \mathbb{R}$ such that for
every  function $f\in \textbf{E}_{\omega}(\mathcal{L}),$ the following inequality holds

\begin{equation}
\label{asymtexact}
\left |\sum_{v\in V}f(v)-\sum_{u\in U}f(u)\theta_{u}\right |\leq 2\gamma^{k}\sqrt{|S|}\left\|f\right\|,
\end{equation}
where $\gamma=\Lambda( \omega+\varepsilon),\> k=2^{l},\> l\in \mathbb{N}.$
\end{col}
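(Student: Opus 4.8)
The plan is to obtain the estimate as a direct specialization of Theorem~\ref{fund} to the bandlimited class $\textbf{E}_{\omega}(\mathcal{L})$. Theorem~\ref{fund} already supplies, for the given $U$, $\varepsilon$ and $k=2^{l}$, a set of weights $\theta_{u}$ together with the inequality
\[
\left|\sum_{v\in V}f(v)-\sum_{u\in U}f(u)\theta_{u}\right|\leq 2\sqrt{|S|}\,\Lambda^{k}\left\|(\varepsilon I+\mathcal{L})^{k}f\right\|,
\]
valid for every $f\in L_{2}(G)$. The same weights will serve here, so the whole task reduces to controlling the factor $\left\|(\varepsilon I+\mathcal{L})^{k}f\right\|$ under the additional hypothesis $f\in\textbf{E}_{\omega}(\mathcal{L})$.

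To bound this norm I would expand the operator power by the binomial theorem; since $\varepsilon I$ and $\mathcal{L}$ commute,
\[
(\varepsilon I+\mathcal{L})^{k}=\sum_{j=0}^{k}\binom{k}{j}\varepsilon^{k-j}\mathcal{L}^{j}.
\]
Applying the triangle inequality and then the Bernstein inequality (\ref{Bern}) with $s=j$ to each summand gives $\|\mathcal{L}^{j}f\|\leq\omega^{j}\|f\|$, hence
\[
\left\|(\varepsilon I+\mathcal{L})^{k}f\right\|\leq\sum_{j=0}^{k}\binom{k}{j}\varepsilon^{k-j}\omega^{j}\|f\|=(\varepsilon+\omega)^{k}\|f\|.
\]
Substituting this into the inequality furnished by Theorem~\ref{fund} produces the right-hand side $2\sqrt{|S|}\,\Lambda^{k}(\varepsilon+\omega)^{k}\|f\|=2\gamma^{k}\sqrt{|S|}\,\|f\|$ with $\gamma=\Lambda(\omega+\varepsilon)$, which is precisely the asserted bound (\ref{asymtexact}).

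I expect no serious obstacle: the corollary is really a one-line consequence of Theorem~\ref{fund}, and the only mildly technical point is the passage from the stated Bernstein inequality for the single power $\mathcal{L}^{s}$ to the shifted operator $(\varepsilon I+\mathcal{L})^{k}$. This passage can alternatively be carried out by spectral calculus: writing $f=\sum_{j}c_{j}\phi_{j}$ in an orthonormal eigenbasis of $\mathcal{L}$ with eigenvalues $\lambda_{j}\leq\omega$, one has $\left\|(\varepsilon I+\mathcal{L})^{k}f\right\|^{2}=\sum_{j}|c_{j}|^{2}(\varepsilon+\lambda_{j})^{2k}\leq(\varepsilon+\omega)^{2k}\|f\|^{2}$, which yields the same inequality. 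Either route completes the argument; the binomial version has the advantage of invoking (\ref{Bern}) verbatim, exactly as the remark preceding the corollary suggests.
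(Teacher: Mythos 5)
Your proposal is correct and follows exactly the route the paper intends: the paper offers no separate proof, stating only that ``the Bernstein inequality (\ref{Bern}) and the last Theorem imply the following result,'' and your argument is a careful fleshing-out of that one-liner, substituting the bound $\left\|(\varepsilon I+\mathcal{L})^{k}f\right\|\leq(\omega+\varepsilon)^{k}\|f\|$ into the estimate of Theorem \ref{fund}. Both of your justifications of that bound (binomial expansion with (\ref{Bern}) applied termwise, or direct spectral calculus on the eigenbasis of $\textbf{E}_{\omega}(\mathcal{L})$) are valid.
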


If in addition the following condition holds 
$$
0<\omega<\frac{1}{\Lambda}-\varepsilon
$$
 and $f\in \textbf{E}_{\omega}(\mathcal{L})$ then this Corollary imply the following Theorem.

\begin{thm}  
\label{bd}
If $U$ is a subset of vertices  for which $S=V\setminus U$ is a $\Lambda$-set then for any $0<\varepsilon <1/\Lambda,\> k=2^{l}, \>l\in \mathbb{N},$ there exist weights $\theta_{u}=\theta_{u}(U, \>k,\>\varepsilon)\in \mathbb{R}$ such that for
every  function $f\in \textbf{E}_{\omega}(\mathcal{L}),$ where
$$
0<\omega<\frac{1}{\Lambda}-\varepsilon,
$$
the following relation holds

\begin{equation}
\label{asymtexact}
\left |\sum_{v\in V}f(v)-\sum_{u\in U}f(u)\theta_{u}\right |\rightarrow 0,
\end{equation}
when  $ k=2^{l}\rightarrow \infty.$
\end{thm}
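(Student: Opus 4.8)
The plan is to obtain this statement as an immediate consequence of the preceding Corollary, so essentially no new analytic work is required. The Corollary already supplies, for any $\varepsilon>0$ and $k=2^{l}$, weights $\theta_{u}$ together with the quantitative estimate
\begin{equation}
\left|\sum_{v\in V}f(v)-\sum_{u\in U}f(u)\theta_{u}\right|\leq 2\gamma^{k}\sqrt{|S|}\,\left\|f\right\|,\qquad \gamma=\Lambda(\omega+\varepsilon),
\end{equation}
valid for every $f\in \textbf{E}_{\omega}(\mathcal{L})$. The entire task is then to show that the right-hand side tends to zero along the sequence $k=2^{l}\to\infty$, and for this the only thing I need is that the contraction factor $\gamma$ is strictly less than $1$.

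First I would fix $\varepsilon$ and $\omega$ obeying the hypotheses $0<\varepsilon<1/\Lambda$ and $0<\omega<\tfrac{1}{\Lambda}-\varepsilon$. Adding the bound on $\omega$ to $\varepsilon$ gives $\omega+\varepsilon<1/\Lambda$, and multiplying through by the positive constant $\Lambda$ yields
\begin{equation}
\gamma=\Lambda(\omega+\varepsilon)<\Lambda\cdot\frac{1}{\Lambda}=1.
\end{equation}
Thus $\gamma$ is a fixed number in the open interval $(0,1)$; this is exactly the role of the threshold condition $\omega<1/\Lambda-\varepsilon$.

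Next I would observe that the remaining factors in the Corollary's bound are independent of $k$: the cardinality $|S|=|V\setminus U|$ is a fixed finite integer since the graph is finite, and $\|f\|$ is the fixed norm of the given function. Hence the right-hand side equals the constant $2\sqrt{|S|}\,\|f\|$ times the purely geometric quantity $\gamma^{k}=\gamma^{2^{l}}$. Because $0<\gamma<1$, we have $\gamma^{2^{l}}\to 0$ as $l\to\infty$, so the whole bound converges to zero, which is precisely the asserted relation.

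I do not expect any genuine obstacle here. All the substantive estimates—the exact cubature on splines (Theorem \ref{ExactS}), the iterated Poincaré/Bernstein argument producing the factor $\Lambda^{k}$ (Theorem \ref{fund}), and the passage to bandlimited functions via the Bernstein inequality (the Corollary)—have already been established. The content of this final theorem is simply to isolate the regime of bandwidths $\omega$ in which the cubature error decays geometrically in $k$, and the computation $\gamma<1$ above is all that certifies that regime.
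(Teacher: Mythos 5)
Your proposal is correct and follows exactly the route the paper intends: the paper itself gives no separate proof of Theorem \ref{bd}, stating only that it follows from the preceding Corollary once $0<\omega<\frac{1}{\Lambda}-\varepsilon$ forces $\gamma=\Lambda(\omega+\varepsilon)<1$, which is precisely the computation you carry out. Your write-up simply makes explicit the observation that $2\gamma^{k}\sqrt{|S|}\,\|f\|\rightarrow 0$ as $k=2^{l}\rightarrow\infty$ since the remaining factors are independent of $k$.
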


\textbf{Example 1.}

Consider the unweighted cycle graph  $C_{1000}$ of $1000$ vertices. The Laplace operator $\mathcal{L}$  has one thousand eigenvalues which are given by the formula $\lambda_{k}=2-2\cos \frac{2\pi k}{1000},\> k=0,1,...,999$ (see \cite{Ch}).

It is easy to verify that every single vertex in $C_{1000}$ is a $\Lambda=\frac{1}{\sqrt{6}}$-set. It is also easy to understand that if closures of two vertices do not intersect i. e. 

$$\left(v_{j}\cup \partial v_{j}\right)\cap \left(v_{i}\cup \partial v_{i}\right)=\emptyset,\>\>v_{j}, v_{i}\in C_{1000},
$$ 
(here $\partial$ is the vertex boundary operator) then their union $v_{j}\cup v_{i}$ is also a $\Lambda=\frac{1}{\sqrt{6}}$-set. It implies, that one can remove from $C_{1000}$ every third  vertex and on the remaining set of $670$ the  formula (\ref{asymtexact}) will be true for the span of about $290$ first eigenfunctions of $\mathcal{L}$.

\bigskip
\textbf{Example 2.}

One can  show \cite{Pes08} that if $S=\{v_{1},v_{2},...,v_{N}\}$ consists of $|S|$ successive
vertices of the graph $C_{1000}$ then it is a $\Lambda$-set
with
$$
\Lambda=\frac{1}{2}\left(\sin\frac{\pi}{2|S|+2}\right)^{-2}.
$$
It implies for example that on a set of $100$ uniformly distributed vertices of $C_{100}$ the formula (\ref{asymtexact}) will be true for every function in the span of about $40$ first eigenfunctions of $\mathcal{L}$.

\bigskip

\section{Another set of exact and approximate cubature formulas for bandlimited functions}

We introduce another set of cubature formulas which are exact on some sets of bandlimited functions.

\begin{thm}
\label{exact1}
If  $U$ is a subset of vertices  for which $S=V\setminus U$ is a $\Lambda$-set then there exist weights $\sigma_{u}=\sigma_{u}(U)\in \mathbb{R},\>u\in U,$ such that for
every function $f\in \textbf{E}_{\omega}(\mathcal{L}),$ where
$$
0<\omega<\frac{1}{\Lambda},
$$
the following exact formula holds
\begin{equation}
\label{PWexact5}
\sum_{v\in V}f(v)=\sum_{u\in U}f(u)\sigma_{ u},\> U=V\setminus S, 
\end{equation}
\end{thm}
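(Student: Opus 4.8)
The plan is to show that the hypothesis $\omega<1/\Lambda$ forces the sampling map $f\mapsto\{f(u)\}_{u\in U}$ to be injective on the finite-dimensional space $\textbf{E}_{\omega}(\mathcal{L})$, and then to realize the integration functional $f\mapsto\sum_{v\in V}f(v)$ as a single fixed linear combination of these samples, with coefficients that do not depend on the particular $f$.

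First I would establish injectivity. Suppose $f\in\textbf{E}_{\omega}(\mathcal{L})$ vanishes on $U$. Since $U=V\setminus S$, the function $f$ is then supported in $S$, i.e. $f\in L_{2}(S)$. Because $S$ is a $\Lambda$-set, the Poincare inequality gives $\|f\|\leq\Lambda\|\mathcal{L}f\|$, while the Bernstein inequality (\ref{Bern}) with $s=1$ gives $\|\mathcal{L}f\|\leq\omega\|f\|$. Combining the two yields $\|f\|\leq\Lambda\omega\|f\|$. As $\Lambda\omega<1$ by hypothesis, this forces $\|f\|=0$, hence $f\equiv0$. Thus the restriction map $R_{U}\colon\textbf{E}_{\omega}(\mathcal{L})\to\mathbb{R}^{U}$, $f\mapsto\{f(u)\}_{u\in U}$, is injective.

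Next, since the graph is finite the space $\textbf{E}_{\omega}(\mathcal{L})$ is finite-dimensional, so $R_{U}$ is a linear isomorphism onto its image $W=R_{U}\left(\textbf{E}_{\omega}(\mathcal{L})\right)\subseteq\mathbb{R}^{U}$. The integration functional $I(f)=\sum_{v\in V}f(v)$ is linear on $\textbf{E}_{\omega}(\mathcal{L})$, hence factors through $R_{U}$ as $I=\ell\circ R_{U}$ for a unique linear functional $\ell$ on $W$. Extending $\ell$ to a linear functional on all of $\mathbb{R}^{U}$ and writing that extension in coordinates, $\ell(\{y_{u}\})=\sum_{u\in U}\sigma_{u}y_{u}$, produces weights $\sigma_{u}=\sigma_{u}(U)$, depending on $U$ and $\omega$ but not on $f$, for which $\sum_{v\in V}f(v)=I(f)=\ell(R_{U}f)=\sum_{u\in U}f(u)\sigma_{u}$, which is exactly (\ref{PWexact5}). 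In the frame language of this section the $\sigma_{u}$ are simply the integrals over $V$ of the dual-frame reconstruction functions associated with the sampling functionals on $U$.

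The main obstacle is the injectivity step: everything hinges on coupling the geometric Poincare inequality that defines a $\Lambda$-set with the spectral Bernstein inequality for bandlimited functions, and the threshold $\omega<1/\Lambda$ is precisely what makes the product $\Lambda\omega$ strictly less than $1$. Once injectivity is secured, the remainder is routine finite-dimensional linear algebra; the only point deserving care is the assertion that the weights are independent of $f$, which is guaranteed because $R_{U}$, and hence the factorization $I=\ell\circ R_{U}$, is fixed as soon as $U$ and $\omega$ are fixed.
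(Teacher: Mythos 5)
Your proposal is correct. The first half coincides exactly with the paper's argument: the paper also proves that $U$ is a uniqueness set for $\textbf{E}_{\omega}(\mathcal{L})$ by taking $f-g$ supported in $S$, combining the Poincare inequality $\|f-g\|\leq \Lambda\|\mathcal{L}(f-g)\|$ with the Bernstein inequality $\|\mathcal{L}(f-g)\|\leq\omega\|f-g\|$, and using $\Lambda\omega<1$. Where you diverge is in how you pass from injectivity of the sampling map to the weights. The paper proceeds through more machinery: it upgrades the uniqueness statement to Plancherel--Polya inequalities via equivalence of norms in finite dimensions, rewrites these as a frame inequality for the projections $\vartheta_{u}$ of normalized Dirac measures onto $\textbf{E}_{\omega}(\mathcal{L})$, invokes the dual frame $\{\Theta_{u}\}$ to obtain the pointwise reconstruction formula $f(v)=\sum_{u\in U}f(u)\Theta_{u}(v)$, and only then sums over $v$ to define $\sigma_{u}=\sum_{v\in V}\Theta_{u}(v)$. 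Your route --- factoring the integration functional $I$ through the injective restriction map $R_{U}$ and extending the resulting functional $\ell$ from the image $W$ to all of $\mathbb{R}^{U}$ --- is more elementary and gives the stated existence result with less overhead; it is entirely rigorous (the only nonuniqueness is in the choice of extension of $\ell$, which is harmless for an existence claim). What the paper's heavier approach buys is the pointwise reconstruction formula itself, which is strictly more than the theorem asserts, and a canonical construction of the weights that connects to the explicit frame bounds and the frame algorithm developed in the remainder of Section 4; the paper itself concedes that the constant $C$ is not effective here, which is why it then develops Theorem \ref{PP200}. Your closing remark correctly identifies that your $\sigma_{u}$ agree with the paper's when the extension is chosen via the dual frame.
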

\begin{proof}

First, we show that the set $U$ is a uniqueness set for the space $\textbf{E}_{\omega}(\mathcal{L})$, i. e. for any two functions
from $\textbf{E}_{\omega}(\mathcal{L})$ the fact that they coincide on $U$ implies
that they coincide on $V$.

If $f,g\in \textbf{E}_{\omega}(\mathcal{L})$ then $f-g\in \textbf{E}_{\omega}(\mathcal{L})$ and
according to the  inequality (\ref{Bern}) the following 
holds true
\begin{equation}
\|\mathcal{L}(f-g)\|\leq \omega\|f-g\|.
\end{equation}
If $f$ and $g$ coincide on $U=V\backslash S$ then $f-g$ belongs
to  $L_{2}(S)$ and since $S$ is a $\Lambda$-set then we will have
$$
\|f-g\|\leq \Lambda\|\mathcal{L}(f-g)\|, \>\>f-g\in L_{2}(S).
$$
 Thus, if $f-g$ is not zero and $\omega<1/\Lambda$ we have the
following inequalities
\begin{equation}
\|f-g\|\leq \Lambda\|\mathcal{L}(f-g)\|\leq \Lambda
\omega\|f-g\|<\|f-g\|,
\end{equation}
which contradict to the assumption that $f-g$ is not identical
zero. Thus,  the set $U$ is a uniqueness set for the space $\textbf{E}_{\omega}(\mathcal{L})$.

It implies that there exists a constant $C=C(U,\omega)$ for which the following 
Plancherel-Polya inequalities hold true
\begin{equation}
\label{PP}
\left(\sum_{u\in U}|f(u)|^{2}\right)^{1/2}\leq\|f\|\leq
C\left(\sum_{u\in U}|f(u)|^{2}\right)^{1/2}
\end{equation}
for all $f\in \textbf{E}_{\omega}(\mathcal{L})$.  
Indeed,   the
functional
$$
|||f|||=\left(\sum_{u\in U}|f(u)|^{2}\right)^{1/2}
$$
defines another  norm on $\textbf{E}_{\omega}(\mathcal{L})$ because  the condition
$|||f|||=0, f\in \textbf{E}_{\omega}(\mathcal{L})$, implies that $f$ is identical
zero on entire graph. Since in finite-dimensional situation any two norms are equivalent we obtain  existence of a constant $C$ for
which (\ref{PP}) holds true.

 Let
$\delta_{v}\in L_{2}(G)$ be a Dirac measure supported at a vertex
$v\in V$. The notation $\vartheta_{v}$ will be used for a function
which is orthogonal projection of the function
$$
\frac{1}{\sqrt{d(v)}}\delta_{v}
$$
on the subspace $\textbf{E}_{\omega}(\mathcal{L})$. If $\varphi_{0}, \varphi_{1},...,\varphi_{j(\omega)}$ are orthonormal eigenfunctions of $\mathcal{L}$ which constitute an orthonormal basis in $\textbf{E}_{\omega}(\mathcal{L})$ then the explicit formula for $\vartheta_{v}$ is
\begin{equation}
\label{proj}
\vartheta_{v}=\sum_{j=0}^{j(\omega)}\varphi_{j}(v)\varphi_{j}.
\end{equation}
In these notations the Plancherel-Polya
inequalities (\ref{PP}) can be written in the form
\begin{equation}
\label{frame}
\sum_{u\in
U}|\left<f,\vartheta_{u}\right>|^{2}\leq\|f\|^{2}\leq
C^{2}\sum_{u\in U}|\left<f,\vartheta_{u}\right>|^{2},
\end{equation}
where $f, \vartheta_{u}\in \textbf{E}_{\omega}(\mathcal{L})$ and
$\left<f,\vartheta_{u}\right>$ is the inner product in $L_{2}(G)$.
These inequalities mean that if $U$ is a uniqueness set for the
subspace $\textbf{E}_{\omega}(\mathcal{L})$ then the functions
$\{\vartheta_{u}\}_{u\in U}$ form a frame in the subspace
$\textbf{E}_{\omega}(\mathcal{L})$ and the tightness of this frame is
$1/C^{2}$.  This fact implies that there exists a    frame of functions
$\{\Theta_{u}\}_{u\in U}$ in the space $\textbf{E}_{\omega}(\mathcal{L})$ such that
the following reconstruction formula holds true for all $f\in
\textbf{E}_{\omega}(\mathcal{L})$
\begin{equation}
f(v)=\sum_{u\in U}f(u)\Theta_{u}(v), v\in V.
\end{equation}
By setting $\sigma_{u}=\sum_{v\in V}\Theta_{u}(v)$ one obtains (\ref{PWexact5}).

\end{proof}

Unfortunately this approach does not give any information about constant $C$ in (\ref{frame}) and it make realization of the Theorem \ref{exact1} problematic. We are going to utilize another approach to the Plancherel-Polya-type inequality which was developed in our paper \cite{PesPes2} and which produces explicit constant. 

We will use the following notion of the relative degree.  Given any subset $A \subset V(G)$ and $v \in V(G)$, we let 
$$
w_A(v) = \sum_{u \in A} w(u,v).
$$
 We note that $w_A(v) = 0$ iff there is no edge connecting $v$ and some element of $A$.

Let's introduce the following notations

$$
U=S_{0},\>\>\partial(S_{0})=S_{1},\>\>\partial (S_{0}\cup S_{1})=S_{2},...,\partial(S_{0}\cup....\cup S_{n-1})=S_{n}.
$$
Clearly, $\{S_{m}\}_{0}^{n}$ is a disjoint cover of $V(G)$.
We let
\[
 D_{m}  = D_m(\mathcal{S}) = \sup_{v \in S_m} w_{S_{m+1}}(v)
\]
and 
\[
 K_m = K_m (\mathcal{S}) = \inf_{v \in S_{m+1}} w_{S_m}(v).
\] 
For $0 \le m < n$ let
\[
 \widehat{K}_m(\widehat{\mathcal{S}}) = \widehat{K}_m = \inf_{v \in S_{m}} w_{S_{m+1}}(v),
\] 
as well as
\[
 \widehat{D}_m (\widehat{\mathcal{S}}) = \widehat{D}_m = \sup_{v \in S_{m+1}} w_{S_{m}}(v).
\]
The set $U=S_0$ is called {\em inital set} of the partition $\mathcal{S}$, it is of primary importance for the following results. 

We define 
\[
\delta_{U} = 
\left( \sum_{m=1}^n \left( \sum_{k=1}^m \frac{1}{K_{k-1}} \left( \prod_{i=k}^{m-1} \frac{D_i}{K_i} \right) \right) \right)^{1/2},
\]

\[
 a_{U} =   \left( \sum_{m=0}^{n} \prod_{j=0}^{m-1} \frac{D_j}{K_j} \right)^{1/2},
\]

\[
\hat{\delta}_{U} = 
\left( \sum_{m=1}^{n'} \left( \sum_{k=0}^{m-1} \frac{1}{\widehat{K}_{k}} \left( \prod_{i=k}^{m-1} \frac{\widehat{K}_i}{\widehat{D}_i} \right) \right) \right)^{1/2},
\]

\[
 \hat{a}_{U} =   \left( \sum_{m=0}^{n'} \prod_{j=0}^{m-1} \frac{\widehat{K}_j}{\widehat{D}_j} \right)^{1/2}.
\]
The weighted gradient norm of a function $f$ on $V(G)$ is defined by 
\[
 \| \nabla_w f \| = \left( \sum_{u, v \in V(G)} \frac{1}{2} |f(u) - f(v)|^2 w(u,v) \right)^{1/2}.
\] 
It is known  \cite{FP}, \cite{Mo}, that
\begin{equation}\label{L-G}
\|\mathcal{L}^{1/2}f\|=\|\nabla f  \|.
\end{equation}
Note,  that  if $f$ belongs to the space   ${\bf E}_{\omega}(\mathcal{L})$ 
then the Bernstein inequality gives 
\begin{equation}
\label{Bern}
\|\nabla f\|=\|\mathcal{L}^{1/2}f\|\leq \sqrt{\omega}\|f\|.
\end{equation}
After all these preparations we can formulate the following statement which follows from  \cite{FP}.

\begin{thm} \label{cor:samp_2}
If the inequality 
\begin{equation}
\label{PPcond}
\delta_{U}\sqrt{\omega}< 1,\>\>\>\omega>0,
\end{equation}
  is satisfied, then the following Plancherel-Polya-type  equivalence holds for all $f\in \textbf{E}_{\omega}(\mathcal{L})$: 
\begin{equation} \label{eqn:cor_norm_equiv_sharp}
 \frac{1-\delta_{U}\sqrt{\omega} }{a_{U}} \| f \| \le \| f |_U \| \le \frac{1+\hat{\delta}_{U}\sqrt{\omega}}{\hat{a}_{U}} \| f \|.
\end{equation}
\end{thm}

Using this result we prove existence of exact cubature formulas on spaces of bandlimited functions.
\begin{thm}
\label{exact20}
If  $U$ is a subset of vertices  for which the inequality ( \ref{PPcond}) is satisfied  then there exists a set of weights $\mu_{u}\in \mathbb{R}, \>u\in U,$ such that for any $f\in \textbf{E}_{\omega}(\mathcal{L})$, where $\omega$ satisfies   (\ref{PPcond}) 
the following exact formula holds
\begin{equation}
\label{PWexact50}
\sum_{v\in V}f(v)=\sum_{u\in U}f(u)\mu_{ u}.
\end{equation}
\end{thm}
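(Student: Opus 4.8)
The plan is to run the argument of Theorem \ref{exact1} almost verbatim, replacing its abstract equivalence-of-norms step by the \emph{explicit} Plancherel--Polya inequalities furnished by Theorem \ref{PP200}. The hypotheses (\ref{cond1}) and (\ref{cond2}) are precisely those of Theorem \ref{PP200}, and (\ref{gamma}) guarantees $\gamma<1$, so the factor $1/(1-\gamma)$ occurring in (\ref{PP300}) is finite and positive. Applying Theorem \ref{PP200} to every $f\in\textbf{E}_{\omega}(\mathcal{L})$ therefore supplies a genuine two-sided Plancherel--Polya estimate (\ref{PP300}), now with strictly positive, geometrically controlled constants.

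First I would observe that $U$ is a uniqueness set for $\textbf{E}_{\omega}(\mathcal{L})$: the bound for $\|f\|$ in (\ref{PP300}) shows that $f(u)=0$ for all $u\in U$ forces $\|f\|=0$, hence $f\equiv 0$ on $V$. Next I would rewrite the Plancherel--Polya inequalities (\ref{PP300}) in the frame form (\ref{frame}) by means of the projected Diracs $\vartheta_{u}$ of (\ref{proj}), exactly as was done between (\ref{PP}) and (\ref{frame}) in the proof of Theorem \ref{exact1}. This exhibits $\{\vartheta_{u}\}_{u\in U}$ as a frame for the finite-dimensional space $\textbf{E}_{\omega}(\mathcal{L})$, this time with explicitly known frame bounds.

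From here the rest is standard finite-dimensional frame theory, carried out as in Theorem \ref{exact1}. Passing to the canonical dual frame $\{\Theta_{u}\}_{u\in U}$ yields the reconstruction formula
$$
f(v)=\sum_{u\in U}f(u)\Theta_{u}(v),\qquad v\in V,
$$
valid for every $f\in\textbf{E}_{\omega}(\mathcal{L})$. Summing this identity over all $v\in V$ and setting
$$
\mu_{u}=\sum_{v\in V}\Theta_{u}(v)
$$
produces the exact cubature formula (\ref{PWexact50}).

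I expect no essentially new difficulty here: the genuinely hard estimate is Theorem \ref{PP200} itself, which I am entitled to assume, and constructing a dual frame is automatic once the lower frame bound is positive. The only point demanding care is confirming that (\ref{cond1})--(\ref{cond2}) really do yield $\gamma<1$ in (\ref{gamma}), so that the lower frame bound is strictly positive; this is exactly what both makes $U$ a uniqueness set and legitimizes the frame construction. The advantage over Theorem \ref{exact1} is that the frame bounds, the dual frame, and hence the weights $\mu_{u}$ are now controlled explicitly through the quantities $K_{j}$ and $D_{i}$, so the computational obstruction noted after Theorem \ref{exact1} (the unknown constant $C$) no longer arises.
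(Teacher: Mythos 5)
Your proposal is correct and follows essentially the same route as the paper: invoke Theorem \ref{PP200} to get the explicit two-sided Plancherel--Polya estimate, conclude that $U$ is a uniqueness set and that the functions representing the point evaluations $f\mapsto f(u)$ form a frame in $\textbf{E}_{\omega}(\mathcal{L})$, then sum the dual-frame reconstruction formula over $v\in V$ to define the weights $\mu_{u}$. The paper's own proof is just a terser version of exactly this argument.
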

\begin{proof}
The previous Theorem shows that $U$ is a uniqueness set for the space $\textbf{E}_{\omega}(\mathcal{L})$, which means that every $f$ in $\textbf{E}_{\omega}(\mathcal{L})$ is uniquely determined by its values on $U$.

 Let us denote by  $\theta_{v}$, where $v\in U$, the orthogonal projection of the Dirac measure $\delta_{v},\>\>v\in U$, onto the space $\textbf{E}_{\omega}(\mathcal{L})$. Since for functions in $\textbf{E}_{\omega}(\mathcal{L})$ one has $f(v)=\left<f, \theta_{v}\right>,\>\>v\in U$,   the inequality (\ref{eqn:cor_norm_equiv_sharp}) takes the form of a frame inequality in the Hilbert space $H=\textbf{E}_{\omega}(\mathcal{L})$
\begin{equation}
\label{frame-ineq}
\left( \frac{1-\epsilon \delta_{U}}{a_{U}} \right)^{2}\| f \|^{2} \le \sum_{v\in U}|\left<f,\theta_{v}\right>|^{2}                   \le \left(\frac{1+\epsilon \hat{\delta}_{U}}{\hat{a}_{U}}\right)^{2} \| f \|^{2},\>\>\epsilon=\sqrt{\omega},
\end{equation}
for all $\>\>f\in \textbf{E}_{\omega}(\mathcal{L})$. According to the general theory of Hilbert frames \cite{Gr} the last inequality implies that there exists a dual frame (which is not unique in general) $\{\Theta_{v}\},\>v\in U,\>\>\Theta_{v}\in \textbf{E}_{\omega}(\mathcal{L})$, in the space $\textbf{E}_{\omega}(\mathcal{L})$ such that for all $f\in \textbf{E}_{\omega}(\mathcal{L})$ the following reconstruction formula holds 
\begin{equation}
\label{reconstruction}
f=\sum_{v\in U}f(v)\Theta_{v}.
\end{equation}
By setting $\sum_{v\in V}\Theta_{v}(u)=\mu_{u}$ we obtain (\ref{PWexact50}).
\end{proof}

To be more specific we consider unweighted  case for which
\begin{equation}
\label{cl}
U=U\cup \partial(U)=V(G).
\end{equation}
In other words, we consider a bipartite graph $V(G)$ with components $S_{0}=U$ and $S_{1}=\partial(U)$.
Keeping the same notations as above we compute
$$
a_{U}=\left(1+\frac{D_{0}}{K_{0}}\right)^{1/2},\>\>\>
\delta_{U}=\frac{1}{K_{0}^{1/2}}.
$$
Thus, we have 
$$
\|f\|\leq \left(1+\frac{D_{0}}{K_{0}}\right)^{1/2}\|f_{0}\|+\frac{1}{K_{0}^{1/2}}\|\nabla f\|.
$$
By applying (\ref{Bern}) along with assumption
\begin{equation}
\label{condition}
\omega<K_{0}
\end{equation}
 we obtain the following estimate 
\begin{equation}
\label{rightestimate}
\|f\|\leq \left(1-\sqrt{\frac{\omega}{K_{0}}  }\right)\left(1+\frac{D_{0}}{K_{0}}\right)^{1/2}\|f_{0}\|,\>\>\>f_{0}=f|_{U}.
\end{equation}
On the other hand 
$$
\hat {a}_{U}=\left(1+\frac{\hat{K}_{0}}{\hat{D}_{0}}\right)^{1/2},\>\>\>
\hat{\delta}_{U}=\frac{1}{\hat{D}^{1/2}_{0}}.
$$
This yields the norm estimate
$$
\|f\|+\frac{1}{\hat{D}_{0}^{1/2}}\|\nabla f\|\geq \left(1+\frac{\hat{K}_{0}}{\hat{D}_{0}}\right)^{1/2}\|f_{0}\|,\>\>\>f_{0}=f|_{U}.
$$
If (\ref{Bern}) holds, then
\begin{equation}
\label{leftside}
 \left(1+\frac{\hat{K}_{0}}{\hat{D}_{0}}\right)^{1/2}\|f_{0}\|\leq \|f\|_{2}+\frac{1}{\hat{D}_{0}^{1/2}}\|\nabla f\|\leq \left(1+\sqrt{\frac{\omega}{\hat{D}_{0}}}\right)\|f\|.
\end{equation}
After all, for functions $f$ in $\textbf{E}_{\omega}(\mathcal{L})$ with $\omega <K_{0}$ we obtain the following frame inequality

\begin{equation}
A\|f\|^{2}\leq \sum_{v\in U}|<f,\theta_{v}>|^{2}     \leq B\|f\|^{2},\>\>\>f_{0}=f|_{U},
\end{equation}
where
\begin{equation}
\label{A}
A=\frac
{    \left(1-\sqrt{\frac {\omega}{K_{0}}}\right)^{2} }
{        1+\frac{D_{0}}{K_{0}}   }, \>\>\>
B=\frac
{    \left(1+\sqrt{\frac {\omega}{\widehat{D_{0}}}}\right)^{2} }
{        1+\frac{\widehat{K_{0}}}{\widehat{D_{0}}}  }.
\end{equation}
It shows that if the condition $\omega<K_{0},\>\>K_{0}=K_{0}(U),$ is satisfied then the set $U$ is a sampling set for the space $\textbf{E}_{\omega}(\mathcal{L})$ and a reconstruction formula (\ref{reconstruction}) holds which leads to (\ref{PWexact50}).

Using the same notations as above we summarize these observations on the following statement. 
\begin{thm}
If $G$ is a bipartite graph with components $S_{0}$ and $S_{1}$, 
$$
K_{0}=\inf_{v\in S_{1}}w_{S_{0}}(v)
$$
then
\begin{enumerate}

\item $S_{0}$ is a uniqueness set for ${\bf E}_{\omega}(\mathcal{L})$ for any $\omega<K_{0}$;

\item if $\theta_{u}$ is orthogonal projection of $\delta_{u}$ onto ${\bf E}_{\omega}(\mathcal{L}),\>\>\omega<K_{0},\>$ then $\>\{\theta_{u}\}_{u\in S_{0}}$ is a frame in ${\bf E}_{\omega}(\mathcal{L})$ with constants (\ref{A});

\item if $\{\Theta_{u}\}$ is a frame dual in ${\bf E}_{\omega}(\mathcal{L}), \>\>\omega<K_{0},\>$ to the frame $\>\{\theta_{u}\}_{u\in S_{0}}$ and 
$$
\sum_{v\in V}\Theta_{u}(v)=\mu_{u},\>\>\>u\in S_{0},
$$
then for any $f\in \textbf{E}_{\omega}(\mathcal{L})$
the following exact formula holds
\begin{equation}\label{last-100}
\sum_{v\in V}f(v)=\sum_{u\in S_{0}}f(u)\mu_{ u}.
\end{equation}

\end{enumerate} 

\end{thm}

Note, that if a bipartite graph $G=S_{0}\cup S_{1}$ is complete and $|S_{0}|=N>M=|S_{1}|$ then the spectrum of $\mathcal{L}$ consists of $0,\> M,\> N, \>M+N,$ where $M $ has multiplicity $N-1$ and $N$ has multiplicity $M-1$. In this case $K_{0}=N$ and according to the last theorem formula (\ref{last-100}) integrates exactly all the functions from the $N$-dimensional subspace ${\bf E}_{\omega}(\mathcal{L}),\>\>\omega<N,$ of the $N+M$-dimensional space $L_{2}(G)$.


\begin{thebibliography}{22}

\bibitem {Ch}
F. R. K. ~Chung, {\em Spectral Graph Theory}, CBMS 92, AMS, 1994.



\bibitem{CM} R.~Coifman, M.~Maggioni, 
{\em Diffusion wavelets for multiscale analysis on graphs and manifolds,} in {\em Wavelets and splines: Athens 2005}, pp. 164-188, Mod. Methods Math., Nashboro Press, Brentwood, TN, 2006.  


\bibitem{GNC}
M. ~Gavish, B. ~Nadler, R. ~Coifman, {\em Multiscale wavelets on trees, graphs and high dimensional data: Theory and applications to
semi-supervised learning}, ICML 2010.

\bibitem {Gr}
K.~Gr\"ochenig, {\em Foundations of time-frequency analysis},
Birkhauser, 2001.

\bibitem{FP}
H. ~ F\"uhr,  I.  Z. Pesenson, {\em     PoincarŽ and Plancherel-Polya inequalities in harmonic analysis on weighted combinatorial graphs }, accepted for publication by SIDMA.


 \bibitem{HVG}
 D. Hammond, P.  Vandergheynst, R. Gribonval, {\em Wavelets on graphs via spectral graph theory},  Appl. Comput. Harmon. Anal. 30 (2011), no. 2, 129Ð150.
 
 

\bibitem{Mo} B. Mohar, {\em Some applications of Laplace eigenvalues of graphs}, in G. Hahn and G. Sabidussi, editors, {\em Graph Symmetry: Algebraic Methods and Applications (Proc. Montr\'real 1996)}, volume 497 of {\rm Adv. Sci. Inst. Ser. C. Math. Phys. Sci.}, pp. 225-275, Dordrecht (1997), Kluwer.


\bibitem{NSZ}
B. Nadler, N. Srebro and X. Zhou, {\em Semi-supervised Learning with the Graph Laplacian}, NIPS 2009.


\bibitem{NO}
Sunil K. Narang and Antonio Ortega, {\em Compact Support Biorthogonal Wavelet Filterbanks for Arbitrary Undirected Graphs},  (2012), arXiv:1210.8129.

\bibitem{Pes08}
 I.~Pesenson, {\em  Sampling in Paley-Wiener spaces on combinatorial graphs},
 Trans. Amer. Math. Soc. 360 (2008), no. 10, 5603--5627.



\bibitem{Pes09}
 I.~Pesenson, {\em  Variational splines and Paley-Wiener spaces on combinatorial graphs},
Constr. Approximation,  29 (2009), no. 1, 1--20.

\bibitem{Pes10}
I.~Pesenson, {\em Removable sets and eigenvalue and eigenfunction
approximations on finite combinatorial graphs}, Applied and Computational Harmonic Analysis,  29  (2010),  no. 2, 123-133.

\bibitem{PesPes1}
I.~Pesenson, M. ~Pesenson, {\em Eigenmaps and minimal and
bandlimited immersions of graphs into Euclidean spaces},
J. of Mathematical Analysis and Applications,  366  (2010),  no. 1, 137-152.

\bibitem{PesPes2}
I.~Pesenson, M. ~Pesenson, {\em Sampling, filtering and sparse approximations on combinatorial graphs}, J. Fourier Anal. Appl.  16  (2010),  no. 6, 921-942.

\bibitem{PG}
I. Z. Pesenson, D. ~Geller, 
{\em Cubature formulas and discrete Fourier transform on compact
manifolds} in  "From Fourier
Analysis and Number Theory to Radon Transforms and
Geometry: In Memory of Leon Ehrenpreis" (Developments in
Mathematics 28) by H.M. Farkas, R.C. Gunning,
M.I. Knopp and B.A. Taylor, Springer NY (2013).


 \bibitem{SNFOV}
 David I Shuman, Sunil K. Narang, Pascal Frossard, Antonio Ortega, Pierre Vandergheynst, 
{\em The Emerging Field of Signal Processing on Graphs: Extending High-Dimensional Data Analysis to Networks and Other Irregular Domains}, 
arXiv:1211.0053

\end{thebibliography}
\end{document}